\documentclass[11pt]{article}

\usepackage[a4paper]{anysize}\marginsize{2.5cm}{2.5cm}{1.3cm}{2cm}
\pdfpagewidth=\paperwidth \pdfpageheight=\paperheight
\usepackage{amsfonts,amssymb,amsthm,amsmath,eucal}
\usepackage{pgf}
\usepackage{bbm}
\usepackage{tikz} %Used for drawing picture in LaTeX
\usepackage{mathrsfs}
\usetikzlibrary{arrows}
\usepackage{color}
\usepackage{float}
\usepackage{subfigure}
\usepackage{caption}
\usepackage{graphicx}
\usepackage{url}
\usepackage[most]{tcolorbox}
\usepackage{hyperref}
\usepackage{cleveref}

\definecolor{qqqqff}{rgb}{0.,0.,0.}

\theoremstyle{plain}
\newtheorem{thm}{Theorem}[section]
\newtheorem{theorem}[thm]{Theorem}

\newtheorem{proposition}[thm]{Proposition}
\newtheorem{corollary}[thm]{Corollary}

\theoremstyle{definition}
\newtheorem{definition}[thm]{Definition}
\newtheorem{remark}[thm]{Remark}

\newtheorem{question}[thm]{Question}

\newtheorem{thevarthm}[thm]{\varthmname}

\newenvironment{varthm*}[1]{\trivlist\item[]{\bf #1.}\it}{\endtrivlist}

\renewcommand\leq{\leqslant}

\newcommand\be{\begin{eqnarray*}}
\newcommand\ee{\end{eqnarray*}}

\newcommand\PP{\mathbb P}

\newcommand\newop[2]{\def#1{\mathop{\rm #2}\nolimits}}
\newop\log{log}
\newop\ord{ord}
\newop\Gal{Gal}
\newop\SL{SL}
\newop\Bl{Bl}
\newop\mult{mult}
\newop\mass{mass}
\newop\div{div}
\newop\codim{codim}
\newop\sing{sing}
\newop\Zeroes{Zeroes}

\def\keywordname{{\bfseries Keywords}}%
\def\keywords#1{\par\addvspace\medskipamount{\rightskip=0pt plus1cm
\def\and{\ifhmode\unskip\nobreak\fi\ $\cdot$
}\noindent\keywordname\enspace\ignorespaces#1\par}}
\def\subclassname{{\bfseries Mathematics Subject Classification
(2020)}\enspace}
\def\subclass#1{\par\addvspace\medskipamount{\rightskip=0pt plus1cm
\def\and{\ifhmode\unskip\nobreak\fi\ $\cdot$
}\noindent\subclassname\ignorespaces#1\par}}

\begin{document}

\title{From a Pascal construction to the Burkhardt quartic}
\author{Tomasz Szemberg and Justyna Szpond}
\date{}
\maketitle
\thispagestyle{empty}
\parskip=.05cm

\begin{abstract}
We continue the study of Pascal-type residual constructions in projective
four-space. Starting from two $k$-tuples of hyperplanes in $\PP^4$ such that
the $k$ diagonal intersection planes are contained in a hyperplane, one obtains
a residual hypersurface of degree $k-1$ containing the remaining $k^2-k$
planes. In this work we consider the case $k=5$, where the twenty residual
planes are contained in a quartic threefold. A balanced
specialization of this construction is projectively equivalent to the celebrated
Burkhardt quartic.

In this model the twenty residual planes form one half of the forty Jacobi
planes on the Burkhardt quartic. We reveal their incidence structure as governed by the
directed complete graph on five vertices. The forty nodes naturally forced by
these planes split as $30+10$, and the Burkhardt specialization adds five
further nodes. We also write down the complementary twenty Jacobi planes explicitly and
describe all forty Steiner hyperplanes in Pascal coordinates.

\keywords{Pascal-type theorem, residual intersection, quartic threefold, Burkhardt
quartic, Jacobi planes, Steiner hyperplanes, configurations of planes,
directed complete graph.}
\subclass{Primary 
14N20; %Configurations and arrangements of linear subspaces
Secondary 
14N05, %Projective techniques in algebraic geometry
14N25, %Varieties of low degree
14J30, %Threefolds
14J70, 
51A20.
}
\end{abstract}

\section{Introduction}
Classical incidence theorems often have residual interpretations. Pascal's
theorem and the Braikenridge--Maclaurin theorem are familiar examples: a
special incidence condition forces the residual part of a complete
intersection to lie on a curve of smaller than expected degree. Higher-dimensional analogues of this idea lead to configurations of linear spaces on
special hypersurfaces.

The case immediately preceding the one studied here was considered in
\cite{PokoraSzemberg2026Pascal}. There, a Pascal-type construction in $\PP^4$
with $k=4$ produces a residual cubic threefold. In general position this cubic
is the Segre cubic, and the construction recovers the fifteen planes on it
together with the Cremona--Richmond configuration. That work was motivated in
part by residual forms of Pascal-type theorems, including the spatial Pascal
theorem for lines in $\PP^3$ due to Le Van \cite{LeVan2026}.

The purpose of the present note is to study the next case, namely $k=5$. The
same residual construction now gives a quartic threefold containing twenty
residual planes. At first sight this might look like merely the next member of
a formal sequence. The main point of the paper is that, for a distinguished
balanced choice of the data, this quartic is not new: it is the classical
Burkhardt quartic.

It is useful to explain first how many parameters the construction has. After
normalizing the data, the quartic equation can be written in an auxiliary
projective space $\PP^5$ with homogeneous coordinates
$$
a,u_1,\ldots,u_5
$$
as
$$
\mathcal X:
\quad
a^4+a^2e_2(u_1,\ldots,u_5)+e_4(u_1,\ldots,u_5)=0,
$$
where $e_2$ and $e_4$ are the elementary symmetric functions in the five
variables $u_1,\ldots,u_5$.
A Pascal quartic threefold is obtained by taking a hyperplane section
$$
X_\lambda=\mathcal X\cap L_\lambda,
$$
where
$$
L_\lambda:
\lambda_0a+\lambda_1u_1+\cdots+\lambda_5u_5=0.
$$
Thus the natural parameter space of these linear sections is the dual
projective space $(\PP^5)^\vee$. This is not yet a moduli space, since we do
not quotient by projective equivalence; it is the parameter space directly
attached to the Pascal construction.

Among these hyperplane sections there is a distinguished line, the balanced
line. It is defined by the condition that the coefficients of
$u_1,\ldots,u_5$ in the linear relation are all equal. Hence, after rescaling,
it can be written as
$$
u_1+\cdots+u_5=5ma.
$$
Equivalently, the balanced members are parametrized by the projective line
$$
[\lambda_0:\lambda:\lambda:\lambda:\lambda:\lambda]\subset (\PP^5)^\vee.
$$
The Burkhardt quartic corresponds to the distinguished point on this line
specified by the condition
$$
m^2=-\frac{3}{25}.
$$
If $\eta^2=-3$, this point is represented by the relation
$$
u_1+\cdots+u_5=\eta a.
$$
For this member, a linear change of coordinates identifies the corresponding
quartic with the standard Burkhardt model
$$
\mathcal B:
\quad
\left\{
\begin{array}{ccc}
\sigma_1(x_0,\ldots,x_5) & = & 0 \\
\sigma_4(x_0,\ldots,x_5) & = & 0
\end{array}\right.
\subset \PP^5,
$$
where $\sigma_1$ and $\sigma_4$ are now elementary symmetric functions in the
six variables $x_0,\ldots,x_5$.

We recall that the Burkhardt quartic is the unique quartic threefold with
forty-five nodes, up to projective equivalence; see
\cite{Todd1936,deJongShepherdBarronVanDeVen1990,CheltsovTschinkelZhang2025}.
Its classical geometry includes forty Jacobi planes and forty Steiner
hyperplanes; see also \cite{Burkhardt1891,BruinNasserden2018,CheltsovTschinkelZhang2025}.
It is worth to point out that the Burkhardt quartic is a classical modular threefold. It is closely related
to the Siegel modular threefold with the full level-three structure. We refer to
Hunt's monograph~\cite{Hunt1996} for a detailed discussion of this classical
and modular background.

The Pascal construction gives a new way to see a large part of this classical
configuration. The twenty residual planes become one half of the forty Jacobi
planes on the Burkhardt quartic. They are naturally indexed by the directed
edges of the complete graph on five vertices. Their incidences force forty
nodes, split into two intrinsic classes of cardinalities
$$
30+10.
$$
The Burkhardt specialization adds five further nodes, giving the classical
number
$$
45=30+10+5.
$$
Each of the twenty Pascal planes contains exactly nine of these nodes, as a
Jacobi plane should. The complementary twenty Jacobi planes also admit simple
explicit equations in the same Pascal coordinates; they are written using a
primitive third root of unity. Finally, the Steiner hyperplanes can likewise
be described directly in Pascal coordinates. In particular, the original
hyperplanes from the Pascal construction already give ten of them.

The paper is organized as follows. In Section~\ref{sec:pascal-quartic} we
recall the residual construction and introduce the balanced Pascal quartic. In
Section~\ref{sec:burkhardt-identification} we identify this balanced member
with the Burkhardt quartic. In Section~\ref{sec:pascal-planes} we describe the
configuration of the twenty Pascal planes. In Section~\ref{sec:nodes} we write
down the forty-five nodes and compute their incidences with the Pascal planes.
In Section~\ref{sec:complementary-planes} we give explicit equations for the
complementary twenty Jacobi planes. Finally, in Section~\ref{sec:steiner} we
describe the Steiner hyperplanes in Pascal coordinates.

\section{The quartic Pascal threefold}
\label{sec:pascal-quartic}

We first recall the residual construction in the case $k=5$.

Let
$$
F_1,\ldots,F_5,
\qquad
G_1,\ldots,G_5
$$
be two quintuples of hyperplanes in $\PP^4$, with equations
$$
F_i=(f_i=0),
\qquad
G_j=(g_j=0).
$$
For $1\leq i,j\leq 5$ put
$$
\Pi_{ij}=F_i\cap G_j.
$$
We assume that the five diagonal planes
$$
\Pi_{11},\Pi_{22},\Pi_{33},\Pi_{44},\Pi_{55}
$$
are contained in a hyperplane
$$
H=(h=0).
$$
After rescaling the equations of the hyperplanes, we may assume that
$$
h=f_i+g_i,
\qquad i=1,\ldots,5.
$$

\begin{definition}
The quartic threefold
$$
X=
\left(
\frac{f_1f_2f_3f_4f_5+g_1g_2g_3g_4g_5}{h}=0
\right)
\subset \PP^4
$$
will be called the quartic Pascal threefold associated to the above data.
\end{definition}

\begin{proposition}\label{prop:pascal-planes-contained}
The quartic Pascal threefold $X$ contains the twenty residual planes
$$
\Pi_{ij}=F_i\cap G_j,
\qquad i\neq j.
$$
\end{proposition}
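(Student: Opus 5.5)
First I will check that the defining expression is a genuine quartic form, and then I will verify that each residual plane lies on it. Set $P=f_1f_2f_3f_4f_5+g_1g_2g_3g_4g_5$. Since $g_i\equiv -f_i \pmod h$ for every $i$, we get
$$
P\equiv f_1\cdots f_5+(-1)^5 f_1\cdots f_5 = 0 \pmod h .
$$
So $h$ divides $P$ and $Q:=P/h$ is a homogeneous polynomial of degree $4$. (If $h\equiv 0$ the data are degenerate, so we assume $h\neq 0$, as is implicit in the definition.) This makes $X=(Q=0)$ well defined.

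Now fix $i\neq j$ and work on the plane $\Pi_{ij}=F_i\cap G_j$. There $f_i=0$ and $g_j=0$, so both products in $P$ vanish, and hence $hQ=P$ vanishes identically on $\Pi_{ij}$. Because $\Pi_{ij}$ is a linear, hence irreducible, subvariety, this means $h$ or $Q$ vanishes on all of $\Pi_{ij}$. It therefore suffices to show that $h$ does not vanish identically on $\Pi_{ij}$.

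The main point is this non-containment. Suppose $\Pi_{ij}\subset H$. On $\Pi_{ij}$ we have $f_i=0$ and $h=0$, and since $g_i=h-f_i$, also $g_i=0$. Likewise $g_j=0$ and $h=0$ give $f_j=0$. Thus $f_i,g_i,f_j,g_j,h$ all vanish on the plane $\Pi_{ij}$, and in particular $\Pi_{ij}=\Pi_{ii}=\Pi_{jj}$ (for planes, the hyperplanes must satisfy $F_i\neq G_j$, $F_i\neq G_i$, etc.). This is a degenerate coincidence of the data that the construction excludes: the hyperplanes are meant to be in general position, apart from the imposed condition, so that all twenty-five $\Pi_{kl}$ are distinct planes.

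The only real work is to make this genericity assumption explicit. With it, $h\not\equiv 0$ on $\Pi_{ij}$, so $Q$ vanishes on $\Pi_{ij}$, that is, $\Pi_{ij}\subset X$. As an alternative I could avoid irreducibility and argue on the dense open set $\Pi_{ij}\setminus H$, where $Q=P/h=0$, and then conclude by continuity.
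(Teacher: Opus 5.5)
Your proof is correct and follows the same route as the paper: both products vanish on $\Pi_{ij}$, and since $\Pi_{ij}\not\subset H$ you can cancel the factor $h$. Your additional steps, namely checking $h\mid P$ via $g_i\equiv -f_i \pmod h$ and showing that $\Pi_{ij}\subset H$ would force $\Pi_{ij}=\Pi_{ii}=\Pi_{jj}$, simply make explicit what the paper's phrase ``not one of the diagonal planes contained in $H$'' takes for granted.
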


\begin{proof}
On $\Pi_{ij}$ with $i\neq j$ one has $f_i=0$ and $g_j=0$. Hence both products
$$
f_1f_2f_3f_4f_5
\qquad\text{and}\qquad
g_1g_2g_3g_4g_5
$$
vanish on $\Pi_{ij}$. Since $\Pi_{ij}$ is not one of the diagonal planes
contained in $H$, the quotient vanishes identically on $\Pi_{ij}$.
\end{proof}

Writing $g_i=h-f_i$ and denoting by $e_r=e_r(f_1,\ldots,f_5)$ the elementary
symmetric polynomial of degree $r$, we get
$$
X:\quad e_4-he_3+h^2e_2-h^3e_1+h^4=0.
$$
Equivalently, we put
$$
a=\frac h2,
\qquad
u_i=f_i-\frac h2=\frac{f_i-g_i}{2}.
$$
Then
$$
f_i=a+u_i,
\qquad
g_i=a-u_i.
$$
Since $a,u_1,\ldots,u_5$ are six linear forms on $\PP^4$, they satisfy one
linear relation. Thus there are scalars $\lambda_0,\lambda_1,\ldots,\lambda_5$,
unique up to a common factor, such that
$$
\lambda_0 a+\lambda_1u_1+\cdots+\lambda_5u_5=0.
$$
The quartic Pascal threefold is therefore the hypersurface
$$
X:\quad a^4+a^2e_2(u_1,\ldots,u_5)+e_4(u_1,\ldots,u_5)=0
$$
inside the hyperplane
$$
\lambda_0 a+\lambda_1u_1+\cdots+\lambda_5u_5=0
$$
of the auxiliary space with coordinates $a,u_1,\ldots,u_5$.

The balanced case is the case in which the five coefficients of
$u_1,\ldots,u_5$ are equal. After rescaling the relation, we may write it as
$$
u_1+\cdots+u_5=5ma.
$$
The Burkhardt specialization corresponds to
$$
m^2=-\frac{3}{25}.
$$
Equivalently, if $\eta^2=-3$, then
$$
u_1+\cdots+u_5=\eta a.
$$
This motivates the following definition.

\begin{definition}\label{def:balanced}
Let $\eta$ be a square root of $-3$. 

The balanced Pascal quartic is the
quartic threefold
$$
X_\eta:\quad a^4+a^2e_2(u_1,\ldots,u_5)+e_4(u_1,\ldots,u_5)=0
$$
in the hyperplane
$$
L_\eta:\quad u_1+\cdots+u_5=\eta a.
$$
\end{definition}

\begin{remark}
In the balanced case, in the original variables,
$$
\sum_{i=1}^5(f_i-g_i)=\pm i\sqrt 3\,h.
$$
\end{remark}

\section{Identification with the Burkhardt quartic}
\label{sec:burkhardt-identification}

In this section we identify the balanced Pascal quartic with the classical Burkhardt
quartic. We use the model
$$
\mathcal B:
\quad
\left\{
\begin{array}{ccc}
\sigma_1(x_0,\ldots,x_5) & = & 0 \\
\sigma_4(x_0,\ldots,x_5) & = & 0
\end{array}\right.
\subset \PP^5,
$$
This threefold is the Burkhardt quartic; see, for example,
\cite{Burkhardt1891,Todd1936,deJongShepherdBarronVanDeVen1990}.

\begin{theorem}\label{thm:balanced-is-burkhardt}
The balanced Pascal quartic $X_\eta$ is projectively equivalent to the
Burkhardt quartic $\mathcal B$.
\end{theorem}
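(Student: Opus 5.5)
The plan is to write down an explicit linear substitution relating the auxiliary coordinates $a,u_1,\ldots,u_5$ on $L_\eta$ to the coordinates $x_0,\ldots,x_5$ on the hyperplane $\sigma_1=0$. The substitution should carry $\sigma_4$ exactly onto $a^4+a^2e_2+e_4$. The symmetry of both equations under permutations of five indices suggests the ansatz
$$
x_i=u_i+ca\quad(i=1,\ldots,5),\qquad x_0=da,
$$
with constants $c,d$ still to be determined. This map is linear and invertible from the coordinates $(a,u_1,\ldots,u_5)$ to $(x_0,\ldots,x_5)$ as soon as $d\neq 0$.

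\emph{Step 1: the linear condition.} Since $\sigma_1(x)=\sum u_i+(5c+d)a$, on $L_\eta$ this becomes $(\eta+5c+d)a$. Hence $L_\eta$ maps onto $\sigma_1=0$ exactly when $d=-\eta-5c$.

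\emph{Step 2: expand $\sigma_4$.} Write $\sigma_4(x_0,\ldots,x_5)=e_4(x_1,\ldots,x_5)+x_0\,e_3(x_1,\ldots,x_5)$. Shifting all five variables by $ca$, the standard expansion of elementary symmetric functions gives
$$
e_4(u+ca)=e_4+2ca\,e_3+3c^2a^2e_2+4c^3a^3e_1+5c^4a^4,
$$
$$
e_3(u+ca)=e_3+3ca\,e_2+6c^2a^2e_1+10c^3a^3 .
$$
Substituting, we obtain
$$
\sigma_4=e_4+(2c+d)a\,e_3+(3c^2+3cd)a^2e_2+(4c^3+6c^2d)a^3e_1+(5c^4+10c^3d)a^4 .
$$

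\emph{Step 3: choose $c,d$.} The target $a^4+a^2e_2+e_4$ has no $e_3$ term, so we impose $d=-2c$. Combined with Step~1 this gives $c=-\eta/3$ and $d=2\eta/3\neq0$. Then the coefficient of $a^2e_2$ is $-3c^2=-\eta^2/3=1$. For the last two terms we use $e_1=\eta a$ on $L_\eta$, together with $c^2=-1/3$, $c^4=1/9$ and $c^3\eta=c^2\cdot c\eta=-1/3$. The $a^4$-coefficient then becomes
$$
-8c^3\eta-15c^4=\tfrac83-\tfrac53=1 .
$$
So on $L_\eta$ the substitution transforms $\sigma_4$ into exactly $a^4+a^2e_2+e_4$. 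This establishes a projective isomorphism $L_\eta\cong\{\sigma_1=0\}$ carrying $X_\eta$ onto $\mathcal B$.

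The only real obstacle is finding the right ansatz. Once the permutation-symmetric shift is chosen, the remaining difficulty is the bookkeeping in Step~3: the vanishing of the $e_3$-coefficient and the normalization of the $a^2e_2$ and $a^4$ coefficients must all hold simultaneously. That they do depends precisely on $\eta^2=-3$, which explains the value $m^2=-3/25$.
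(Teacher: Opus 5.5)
Your proof is correct and is essentially the paper's own argument. The paper uses exactly your substitution $x_0=\frac{2\eta}{3}a$, $x_i=u_i-\frac{\eta}{3}a$ (your $c=-\eta/3$, $d=2\eta/3$) with the same shifted expansions of $e_3$ and $e_4$; the only differences are that you derive $c,d$ from an ansatz instead of stating them outright, and that you make the invertibility ($d\neq 0$) explicit.
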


\begin{proof}
Define six linear forms by
$$
x_0=\frac{2\eta}{3}a,
\qquad
x_i=u_i-\frac{\eta}{3}a,
\quad i=1,\ldots,5.
$$
Since $u_1+\cdots+u_5=\eta a$, we have
$$
x_0+x_1+\cdots+x_5=0.
$$
Put
$$
b=\frac{\eta}{3}a.
$$
Then
$$
x_0=2b,\qquad x_i=u_i-b,\quad i=1,\ldots,5,
$$
and the relation $u_1+\cdots+u_5=\eta a$ becomes $e_1(u_1,\ldots,u_5)=3b$.
%Let $y_i=u_i-b$. 
Then
$$
\sigma_4(x_0,\ldots,x_5)=e_4(x_1,\ldots,x_5)+2b\,e_3(x_1,\ldots,x_5).
$$
We have
$$
e_3(x_1,\ldots,x_5)=e_3(u_1,\ldots,u_5)-3be_2(u_1,\ldots,u_5)+6b^2e_1(u_1,\ldots,u_5)-10b^3
$$
and
$$
e_4(x_1,\ldots,x_5)=e_4(u_1,\ldots,u_5)-2be_3(u_1\ldots,u_5)+3b^2e_2(u_1,\ldots,u_5)-4b^3e_1(u_1,\ldots,u_5)+5b^4.
$$
Therefore 
$$
\begin{aligned}
\sigma_4(x_0,\ldots,x_5)
&=
e_4(u_1,\ldots,u_5)-3b^2e_2(u_1,\ldots,u_5)+8b^3e_1(u_1,\ldots,u_5)-15b^4  \\
&=
e_4(u_1,\ldots,u_5)-3b^2e_2(u_1,\ldots,u_5)+9b^4,
\end{aligned}
$$
because $e_1(u_1,\ldots,u_5)=3b$. Since $\eta^2=-3$, we have
$$
b^2=-\frac{1}{3}a^2
\qquad\text{and}\qquad
9b^4=a^4.
$$
Thus
$$
\sigma_4(x_0,\ldots,x_5)
=
a^4+a^2e_2(u_1,\ldots,u_5)+e_4(u_1,\ldots,u_5).
$$
Thus the equation of $X_\eta$ is transformed into $\sigma_4=0$ on the
hyperplane $\sigma_1=0$. This proves the claim.
\end{proof}

\begin{remark}
The family
$$
\sum_{i=0}^5x_i=0,
\qquad
\sum_{i=0}^5x_i^4-t\left(\sum_{i=0}^5x_i^2\right)^2=0
$$
contains several special nodal quartic threefolds studied by van der Geer and
by Cheltsov--Shramov; see \cite{vanDerGeer1982,CheltsovShramov2016}. 
Let $p_r=\sum_{i=0}^5x_i^r$ and let $\sigma_r$ denote the elementary
symmetric polynomial of degree $r$ in $x_0,\ldots,x_5$. Newton identities give
$$
p_2-\sigma_1p_1+2\sigma_2=0
$$
and
$$
p_4-\sigma_1p_3+\sigma_2p_2-\sigma_3p_1+4\sigma_4=0.
$$

On the hyperplane $\sigma_1=0$ we have $p_1=0$, hence
$$
p_2=-2\sigma_2\qquad \mbox{ and }\qquad
p_4+\sigma_2p_2+4\sigma_4=0.
$$
Thus
$$
p_4=2\sigma_2^2-4\sigma_4.
$$
Consequently,
$$
\sum_{i=0}^5x_i^4-t\left(\sum_{i=0}^5x_i^2\right)^2
=
p_4-tp_2^2
=
(2-4t)\sigma_2^2-4\sigma_4.
$$    
Thus the value $t=1/2$ gives precisely the Burkhardt model $\sigma_1=\sigma_4=0$.
The balanced Pascal quartic therefore corresponds to the Burkhardt member of this
classical family.
\end{remark}

\section{The twenty Pascal planes}
\label{sec:pascal-planes}

On $X_\eta$ the twenty residual planes have the form
$$
\Pi_{ij}:\quad a+u_i=0,
\qquad a-u_j=0,
\qquad i\neq j.
$$
We identify $\Pi_{ij}$ with the directed edge $i\to j$ of the complete directed
graph on the vertex set $\{1,\ldots,5\}$.

\begin{proposition}\label{prop:line-incidence}
Two distinct Pascal planes $\Pi_{ij}$ and $\Pi_{kl}$ meet along a line if and
only if one of the following three conditions holds:
$$
i=k,
$$
$$
j=l,
$$
or
$$
(i,j)=(l,k).
$$
In all other cases the two planes meet in a single point.
\end{proposition}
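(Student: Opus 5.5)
We work in the hyperplane $L_\eta\cong\PP^4$ with coordinates $a,u_1,\ldots,u_5$ subject to $u_1+\cdots+u_5=\eta a$. Each plane $\Pi_{ij}$ is cut out by two linear forms, $a+u_i$ and $a-u_j$. Two planes in $\PP^4$ always meet. Their intersection is $\Pi_{ij}\cap\Pi_{kl}$, the common zero set of the four forms
$$
a+u_i,\quad a-u_j,\quad a+u_k,\quad a-u_l
$$
restricted to $L_\eta$. Its dimension is $4-r$, where $r$ is the rank of these four forms modulo the relation defining $L_\eta$. Equivalently, $r=\operatorname{rank}$ of the span of these four forms together with $\ell=u_1+\cdots+u_5-\eta a$ in the six-dimensional space of linear forms in $a,u_1,\ldots,u_5$, minus one. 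So the task reduces to a rank computation: the planes meet in a line exactly when $r=2$ fails and $r=3$ holds, and in a point exactly when $r=4$. (The case $r=2$ would mean the planes coincide, which does not happen for distinct pairs.)

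We split according to the coincidences among $i,j,k,l$, using $i\neq j$ and $k\neq l$.

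If $i=k$ (so $j\neq l$), the forms are $a+u_i$, $a-u_j$, $a-u_l$. These together with $\ell$ span a space of dimension $4$, since $\ell$ involves $u_m$ for indices outside $\{i,j,l\}$. Hence $r=3$ and the intersection is a line. The case $j=l$ is symmetric.

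If $(k,l)=(j,i)$, the forms are $a+u_i$, $a-u_j$, $a+u_j$, $a-u_i$. Their span is $\langle a,u_i,u_j\rangle$, of dimension $3$, and $\ell$ is independent of it because of the other three $u_m$. Again $r=3$, so the intersection is a line.

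In all remaining cases the set $\{i,j,k,l\}$ has at least three elements, and the four forms are independent modulo $\ell$.

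\begin{itemize}
\item If $i,j,k,l$ are pairwise distinct, the forms involve four distinct $u$'s. They are independent of one another, and the span together with $\ell$ has dimension $5$ because $\ell$ contains the fifth $u_m$. So $r=4$.
\item If exactly three indices occur, the possibilities are $i=l$ with $j\neq k$, or $j=k$ with $i\neq l$.
\end{itemize}

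In the case $i=l$ the forms are $a+u_i$, $a-u_i$, $a-u_j$, $a+u_k$, which span $\langle a,u_i,u_j,u_k\rangle$, of dimension $4$. Then $\ell$ remains independent thanks to the two unused indices, so $r=4$. The case $j=k$ is analogous. In every one of these cases the intersection is a single point.

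The only delicate point is making sure the relation $\ell$ never drops the rank. This holds because $\ell$ has nonzero coefficient on every $u_m$, and in each case at least one index $m\notin\{i,j,k,l\}$ exists, since at most four of the five indices are used. Note that $\eta$ plays no role, so the statement in fact holds for any balanced member and indeed for any relation whose $u$-coefficients are all nonzero. I expect no real obstacle beyond bookkeeping the cases so that they match the three listed conditions exactly.
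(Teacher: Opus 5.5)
Your proof is correct and takes essentially the same route as the paper: both compute the rank of the four forms $a+u_i,\ a-u_j,\ a+u_k,\ a-u_l$ on $L_\eta$ and read off whether the intersection is a line or a point. Your version is more complete than the paper's sketch, because you verify explicitly that the ambient relation $u_1+\cdots+u_5=\eta a$ never lowers the rank (some index is always unused) and you treat the mixed cases $i=l$ or $j=k$, which the paper passes over.
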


\begin{proof}
The plane $\Pi_{ij}$ is defined by the two equations $a+u_i=0$ and
$a-u_j=0$ in the four-dimensional space $L_\eta$ (see Definition \ref{def:balanced}). Intersecting two such planes
gives the four equations
$$
a+u_i=0,
\quad
a-u_j=0,
\quad
a+u_k=0,
\quad
a-u_l=0
$$
in $L_\eta$. For a general pair of directed edges these equations have rank
four, and the intersection is a point. The rank drops to three precisely when
one equation is repeated or forced by the other three. This happens when the
two directed edges have the same tail, the same head, or are opposite. These
are exactly the three cases listed above.
\end{proof}
The three line incidences are explicitly
$$
\Pi_{ij}\cap\Pi_{il}=F_i\cap G_j\cap G_l,
$$
$$
\Pi_{ij}\cap\Pi_{kj}=F_i\cap F_k\cap G_j,
$$
and
$$
\Pi_{ij}\cap\Pi_{ji}=H\cap F_i\cap F_j.
$$
Consequently, there are
$$
30+30+10=70
$$
pairs of Pascal planes meeting along a line. The remaining
$$
\binom{20}{2}-70=120
$$
pairs meet in one point.

\section{The forty-five nodes}
\label{sec:nodes}

We now describe the nodes of $X_\eta$ in the Pascal coordinates. Since
$X_\eta$ is the Burkhardt quartic by Theorem~\ref{thm:balanced-is-burkhardt},
these are precisely the forty-five nodes of the Burkhardt quartic. 

There are three natural families.

\subsection{Thirty rectangular nodes}

For a partition
\begin{equation}\label{eq:AB}
\{1,\ldots,5\}=A\sqcup B\sqcup\{c\},
\qquad |A|=|B|=2,    
\end{equation}
we define a point $R_{A,B;c}$ in the affine chart $a\neq 0$ by normalizing
$a=1$ and setting
$$
u_i=-1\ \text{for } i\in A,
\qquad
u_j=1\ \text{for } j\in B,
\qquad
u_c=\eta.
$$
Equivalently,
$$
R_{A,B;c}
=
[\,1:u_1:\ldots:u_5\,].
$$
For example, for $A=\{1,2\}$ and $B=\{3,4\}$
$$
R_{\{1,2\},\{3,4\};5}=[1:-1:-1:1:1:\eta].
$$
There are
$$
5\binom{4}{2}=30
$$
such points. The point $R_{A,B;c}$ lies on exactly the four Pascal planes
$$
\Pi_{ij},
\qquad
 i\in A,
\quad
 j\in B.
$$
Thus it corresponds to an oriented rectangle in the directed complete graph.

\subsection{Ten triangular nodes}

Let $C=\{p,q\}\subset\{1,\ldots,5\}$ and put
$$
D=\{1,\ldots,5\}\setminus C.
$$
We define
$$
T_C:\quad
a=0,
\qquad
u_i=0\ \text{for } i\in D,
\qquad
u_p+u_q=0.
$$
Projectively, this means $u_p=1$ and $u_q=-1$. There are $10$ such points. The
point $T_C$ lies on exactly six Pascal planes
$$
\Pi_{ij},
\qquad
 i,j\in D,
\quad
 i\neq j.
$$
Thus it corresponds to the complete directed triangle on the three vertices of
$D$.

\subsection{Five extra nodes}

For every $c\in\{1,\ldots,5\}$ we define
$$
E_c:\quad
a=1,
\qquad
u_c=-\frac{\eta}{3},
\qquad
u_i=\frac{\eta}{3}\ \text{for } i\neq c.
$$
These five points do not lie on any of the twenty Pascal planes.

\begin{proposition}\label{prop:node-incidence-table}
The incidences between the forty-five nodes and the twenty Pascal planes are
as follows:
$$
\begin{array}{c|c|c}
\text{type of node} & \text{number of nodes} & \text{Pascal planes through the node}\\
\hline
R_{A,B;c} & 30 & 4\\
T_C & 10 & 6\\
E_c & 5 & 0
\end{array}
$$
In particular, each Pascal plane contains exactly nine nodes.
\end{proposition}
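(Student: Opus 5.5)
The plan is a direct verification in the Pascal coordinates $a,u_1,\ldots,u_5$ on $L_\eta$, followed by a double count. Whether a point lies on a plane is decided by the definition of $\Pi_{ij}$ alone: a point $[a:u_1:\ldots:u_5]$ lies on $\Pi_{ij}$ if and only if $u_i=-a$ and $u_j=a$. So for each type of node I first check that it lies on $L_\eta$ and on $X_\eta$. I then read off the indices $i$ with $u_i=-a$ (the possible tails) and the indices $j$ with $u_j=a$ (the possible heads). The Pascal planes through the node are exactly the $\Pi_{ij}$ with $i$ a tail, $j$ a head and $i\neq j$.

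\textbf{Rectangular nodes.} For $R_{A,B;c}$ we have $a=1$, and $\sum u_i=-2+2+\eta=\eta$, so the point lies on $L_\eta$. The tails are exactly $A$, since $\eta\neq\pm1$, and the heads are exactly $B$. This gives the $|A|\cdot|B|=4$ planes $\Pi_{ij}$ with $i\in A$, $j\in B$. Membership in $X_\eta$ is immediate, because the point lies on a Pascal plane and Proposition~\ref{prop:pascal-planes-contained} applies.

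\textbf{Triangular nodes.} For $T_C$ we have $a=0$, so the conditions become $u_i=0=u_j$. The indices with $u_i=0$ are exactly those in $D$, because $u_p=1$ and $u_q=-1$ are nonzero. Hence the planes through $T_C$ are the $\Pi_{ij}$ with $i\neq j$ in $D$, which gives $3\cdot 2=6$ planes. Again $T_C\in L_\eta$ since $\sum u_i=0=\eta a$.

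\textbf{Extra nodes.} For $E_c$ we have $a=1$ and every $u_i=\pm\eta/3$. Since $\eta/3\neq\pm1$ (its square is $-1/3$), no coordinate equals $\pm a$, so $E_c$ lies on no Pascal plane. We still need $E_c\in L_\eta$: indeed $\sum u_i=4\eta/3-\eta/3=\eta$. We also need $E_c\in X_\eta$, which I check directly. Under the coordinate change in the proof of Theorem~\ref{thm:balanced-is-burkhardt}, $E_c$ has $b=\eta/3$, so $x_0=2\eta/3$, $x_c=-2\eta/3$ and all other $x_i=0$. This is, up to scale, the point with coordinates $e_0-e_c$, and $\sigma_4$ vanishes there because only two coordinates are nonzero. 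These are among the classical Burkhardt nodes of type $e_i-e_j$.

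\textbf{Nodes and the count of nine.} That all $45$ points are actually nodes could be checked by computing the gradient. However, once they are identified with points of $\mathcal B$ via Theorem~\ref{thm:balanced-is-burkhardt}, I would rather check that they match the classical list of the $45$ nodes of $\mathcal B$: the $15$ points $e_i-e_j$ together with the $30$ points permuting $(1,1,\omega,\omega,\omega^2,\omega^2)$. Under the coordinate change, $R_{A,B;c}$ and $T_C$ should land on these. The matching is the main obstacle. I would first try the gradient route for a single representative of each type, using the $S_5$-symmetry permuting the $u_i$: the gradient of $a^4+a^2e_2+e_4$ restricted to $L_\eta$ must be proportional to the normal $(-\eta,1,\ldots,1)$.

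The final claim then follows by double counting incidences:
$$
30\cdot4+10\cdot6+5\cdot0=180=20\cdot 9.
$$
This alone only gives an average of nine nodes per plane. Exactness follows from the $S_5$-symmetry, which acts transitively on the directed edges. Directly, the plane $\Pi_{ij}$ contains the $R_{A,B;c}$ with $i\in A$ and $j\in B$; there are $3\cdot2=6$ of these. It also contains the $T_C$ with $i,j\in D$, i.e.\ with $C\subset\{1,\ldots,5\}\setminus\{i,j\}$; there are $\binom32=3$ of these. In total $\Pi_{ij}$ contains $6+3=9$ nodes.
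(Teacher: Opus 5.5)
Your proposal is correct and follows essentially the paper's argument: membership in $\Pi_{ij}$ is read off from $u_i=-a$, $u_j=a$, which gives $3\cdot 2=6$ rectangular nodes, $\binom{3}{2}=3$ triangular nodes and no $E_c$ on each plane, and your per-node checks (that the points lie on $L_\eta$ and $X_\eta$) and the $S_5$ double count are correct extras that the paper omits. The nodality matching you flag as the main obstacle is simply taken for granted in the paper, and it is in fact immediate under $x_0=2b$, $x_i=u_i-b$: one gets $T_C\mapsto e_p-e_q$, $E_c\mapsto e_0-e_c$, and $R_{A,B;c}$ maps to the point with $x_c=x_0$, $x_i=\zeta x_0$ for $i\in A$ and $x_j=\zeta^2x_0$ for $j\in B$, where $\zeta=(-1+\eta)/2$.
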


\begin{proof}
Fix $\Pi_{ij}$. A rectangular node $R_{A,B;c}$ lies on $\Pi_{ij}$ if and only
if $i\in A$ and $j\in B$. After fixing $i$ and $j$, the remaining three symbols
can be distributed by choosing one for $A$, one for $B$, and one for $c$. This
gives $3\cdot 2=6$ rectangular nodes on $\Pi_{ij}$.

A triangular node $T_C$ lies on $\Pi_{ij}$ if and only if $i,j\in D$, where
$D=\{1,\ldots,5\}\setminus C$. Equivalently, $C$ is a two-element subset of the
three symbols different from $i$ and $j$. This gives $\binom{3}{2}=3$
triangular nodes on $\Pi_{ij}$.

Finally, no $E_c$ lies on $\Pi_{ij}$, since the equations of $\Pi_{ij}$ require
$u_i=-1$ and $u_j=1$, while the coordinates of $E_c$ are all equal to
$\pm\eta/3$. Thus $\Pi_{ij}$ contains $6+3=9$ nodes.
\end{proof}

\begin{corollary}\label{cor:pascal-planes-jacobi}
The twenty Pascal planes are Jacobi planes on the Burkhardt quartic.
\end{corollary}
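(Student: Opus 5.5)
The plan is to use the classical characterization of Jacobi planes on the Burkhardt quartic $\mathcal B$. A Jacobi plane is a plane contained in $\mathcal B$. Classically, the planes on $\mathcal B$ are exactly the forty Jacobi planes (see \cite{Todd1936,BruinNasserden2018,CheltsovTschinkelZhang2025}), and each of them contains exactly nine of the forty-five nodes. With this characterization the corollary follows from results already proved, without any new computation.

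The steps are as follows. First, by Proposition~\ref{prop:pascal-planes-contained}, each $\Pi_{ij}$ with $i\neq j$ lies on the Pascal quartic. In particular each lies on $X_\eta$, where it is cut out by $a+u_i=0$ and $a-u_j=0$ inside $L_\eta$. Second, by Theorem~\ref{thm:balanced-is-burkhardt}, the explicit linear change of coordinates $x_0=2b$, $x_i=u_i-b$ (with $b=\eta a/3$) carries $X_\eta$ isomorphically onto $\mathcal B$. It therefore carries each $\Pi_{ij}$ onto a plane contained in $\mathcal B$. Third, I will check that the twenty planes are pairwise distinct, so that we really obtain twenty Jacobi planes. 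This follows because the pairs of defining linear forms differ; Proposition~\ref{prop:line-incidence} already shows that distinct indices give planes meeting in a line or in a point. As a consistency check matching the classical description, Proposition~\ref{prop:node-incidence-table} shows that each $\Pi_{ij}$ contains exactly nine of the forty-five nodes.

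If a more self-contained argument is wanted, one can avoid quoting the statement ``every plane on $\mathcal B$ is a Jacobi plane.'' Instead, transport one Pascal plane to the $x$-coordinates and compare it with a standard Jacobi plane. In the model $\sigma_1=\sigma_4=0$, the standard Jacobi planes are the planes $x_{a}+x_{b}=x_{c}+x_{d}=x_{e}+x_{f}=0$ for a pairing of $\{0,\dots,5\}$, together with their images under the symmetry group $W(E_6)$. For $\Pi_{ij}$ we have $a+u_i=0$ and $a-u_j=0$, so $u_i=-a$ and $u_j=a$. This gives $x_i=-a-b$ and $x_j=a-b$. Using $b^2=-a^2/3$, one can try to match these against linear combinations occurring in such planes.

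The main obstacle is precisely this identification, if one insists on it. The Pascal planes are not of the simple pairing form, since the twenty planes of that form are presumably the complementary ones. One would need an explicit element of the symmetry group mapping $\Pi_{ij}$ onto a standard plane. I would therefore rely on the classical fact that $\mathcal B$ contains exactly forty planes, all of them Jacobi planes, and reduce the corollary to the containment and equivalence statements above.
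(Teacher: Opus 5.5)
\textbf{Verdict.} Your main argument is correct. It reaches the corollary through a different classical fact than the paper uses.

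\textbf{Comparison with the paper.} The paper relies on the characterization ``the planes through nine nodes of the Burkhardt quartic are precisely the Jacobi planes.'' It supplies the hypothesis from the count in Proposition~\ref{prop:node-incidence-table}. You rely instead on ``the planes lying on $\mathcal B$ are precisely the forty Jacobi planes.'' You supply the hypothesis from Proposition~\ref{prop:pascal-planes-contained}, transported by the linear isomorphism of Theorem~\ref{thm:balanced-is-burkhardt}, and use the nine-node count only as a check.

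Both are one-line reductions to a cited fact, but your input is lighter. Containment of $\Pi_{ij}$ is an immediate factorization. The paper's route instead depends on the list of forty-five nodes in Section~\ref{sec:nodes}, and the paper asserts rather than checks that these points are the singular points. By the same reasoning, Proposition~\ref{prop:lambda-contained} alone would suffice for the $\Lambda$-planes in Theorem~\ref{thm:forty-jacobi-planes}. Distinctness of the twenty planes is true but is not needed for the corollary itself.

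\textbf{Error in your optional ``self-contained'' paragraph.} You do not use this paragraph, but it is wrong.
\begin{itemize}
\item The planes $x_a+x_b=x_c+x_d=x_e+x_f=0$ are the planes of the Segre cubic $\sigma_1=\sigma_3=0$, and there are fifteen of them, not twenty. On such a plane the coordinates are $\pm s,\pm t,\pm w$, so $\sigma_4=s^2t^2+s^2w^2+t^2w^2\not\equiv 0$. Hence these planes do not lie on $\mathcal B$.
\item In the model $\sigma_1=\sigma_4=0$, the forty planes are as follows. For each $3$-subset $\{i,j,k\}\subset\{0,\ldots,5\}$ there are two planes, on which $(x_i,x_j,x_k)$ is proportional to $(1,\zeta,\zeta^2)$ or to $(1,\zeta^2,\zeta)$. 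On such a plane the other three coordinates sum to zero. The generating polynomial factors as $\prod_l(1+x_lz)=(1+e_3z^3)(1+e_2'z^2+e_3'z^3)$, where $e_r$ and $e_r'$ are the elementary symmetric functions of the two triples. This has no $z^4$ term, so $\sigma_4=0$.
\item With this description the obstacle you anticipated disappears. On $\Pi_{ij}$ one has $x_0=2b$, $x_i=-a-b$ and $x_j=a-b$. Hence $x_0+x_i+x_j=0$ and $x_0x_i+x_0x_j+x_ix_j=-3b^2-a^2=0$. So $\Pi_{ij}$ is one of the two planes attached to the triple $\{0,i,j\}$; explicitly, $x_i=\zeta x_0$ and $x_j=\zeta^2x_0$.
\item No element of $W(E_6)$ is needed. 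The complementary planes $\Lambda_{pq}^{rst}$ are exactly the planes attached to triples inside $\{1,\ldots,5\}$.
\end{itemize}
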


\begin{proof}
On the Burkhardt quartic, the planes containing nine nodes are precisely the
Jacobi planes, and they are contained in the quartic; see
\cite{BruinNasserden2018,CheltsovTschinkelZhang2025}. The assertion follows
from Proposition~\ref{prop:node-incidence-table}.
\end{proof}

\section{The complementary twenty Jacobi planes}
\label{sec:complementary-planes}

The remaining twenty Jacobi planes also have a simple form in the Pascal
coordinates. Put
$$
\zeta=\frac{-1+\eta}{2},
$$
so that $\zeta$ is a primitive third root of unity, since it satisfies
$$
\zeta^2+\zeta+1=0.
$$

Let $C=\{p,q\}$ be a two-element subset of $\{1,\ldots,5\}$, and write its
complement as an ordered triple
$$
D=\{r,s,t\}.
$$
We define two planes
$$
\Lambda_{pq}^{rst}:\quad
u_p+u_q=0,
\qquad
u_r+\zeta u_s+\zeta^2u_t=0,
$$
and
$$
\Lambda_{pq}^{rts}:\quad
u_p+u_q=0,
\qquad
u_r+\zeta^2u_s+\zeta u_t=0.
$$
A cyclic permutation of $(r,s,t)$ gives the same pair of planes, while reversing
the cyclic order interchanges $\Lambda_{pq}^{rst}$ and $\Lambda_{pq}^{rts}$.

\begin{proposition}\label{prop:lambda-contained}
The twenty planes $\Lambda_{pq}^{rst}, \Lambda_{pq}^{rts}$ are contained in $X_\eta$.
\end{proposition}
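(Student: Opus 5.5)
The plan is a direct substitution into the equation of $X_\eta$ from Definition~\ref{def:balanced}, using the two linear equations of $\Lambda_{pq}^{rst}$ together with the balancing relation $u_1+\cdots+u_5=\eta a$ that defines $L_\eta$. By the symmetry of the equation under permutations of $u_1,\ldots,u_5$, and by the fact that $\Lambda_{pq}^{rts}$ is obtained from $\Lambda_{pq}^{rst}$ by exchanging $\zeta$ and $\zeta^2$ (equivalently, swapping $s$ and $t$), it suffices to treat one plane, say $\Lambda_{12}^{345}$. Along the way I will note that its two defining equations are independent of each other and of the equation of $L_\eta$. Hence it really is a plane in $L_\eta$, and the twenty planes are distinct.

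The key step is to factor the quartic after imposing $u_p+u_q=0$. Write $u_p=w$, $u_q=-w$, and let $s_1,s_2,s_3$ be the elementary symmetric functions of $u_r,u_s,u_t$. The generating function gives
$$
\prod_{i=1}^5(1+u_iT)=(1-w^2T^2)(1+s_1T+s_2T^2+s_3T^3).
$$
Reading off coefficients, I expect $e_2=s_2-w^2$ and $e_4=-w^2s_2$. Substituting these into the equation of $X_\eta$ should give the factorization
$$
a^4+a^2e_2+e_4=(a^2-w^2)(a^2+s_2).
$$
It is therefore enough to show that $a^2+s_2$ vanishes on the plane.

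For this I use the balancing relation. On $u_p+u_q=0$ it becomes $s_1=\eta a$, so $a^2=s_1^2/\eta^2=-s_1^2/3$. Hence
$$
a^2+s_2=-\tfrac13\,(s_1^2-3s_2).
$$
Next I invoke the classical identity
$$
s_1^2-3s_2=u_r^2+u_s^2+u_t^2-u_ru_s-u_su_t-u_tu_r=(u_r+\zeta u_s+\zeta^2u_t)(u_r+\zeta^2u_s+\zeta u_t),
$$
which holds because $\zeta+\zeta^2=-1$ and $\zeta^3=1$. The first factor vanishes on $\Lambda_{pq}^{rst}$ and the second vanishes on $\Lambda_{pq}^{rts}$. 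So the quartic vanishes identically on both planes, and letting $\{p,q\}$ range over the ten pairs gives all twenty planes.

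The only step needing care is the bookkeeping of the elementary symmetric functions in the factorization; everything else is immediate. I expect no genuine obstacle. The choice $\eta^2=-3$ enters exactly once, to turn $a^2+s_2$ into a multiple of the norm form $s_1^2-3s_2$.
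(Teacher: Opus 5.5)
Your proof is correct and follows the paper's approach: the paper substitutes $u_p+u_q=0$, $u_r+\zeta u_s+\zeta^2u_t=0$ and $u_r+u_s+u_t=\eta a$ into the quartic and simply asserts that the result is zero. Your factorization $a^4+a^2e_2+e_4=(a^2-u_p^2)(a^2+s_2)$, with $a^2+s_2=-\tfrac13(u_r+\zeta u_s+\zeta^2u_t)(u_r+\zeta^2u_s+\zeta u_t)$, is the same computation the paper writes out later for $X_\eta\cap K_{pq}$ in the proof of Proposition~\ref{prop:Steiner_30}, and it makes explicit that only $\eta^2=-3$ and $\zeta^2+\zeta+1=0$ are used.
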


\begin{proof}
It is enough to consider $\Lambda_{pq}^{rst}$. On this plane we have
$u_p+u_q=0$ and, for the complementary triple $(r,s,t)$,
$$
u_r+\zeta u_s+\zeta^2u_t=0.
$$
Together with the ambient relation $u_1+\cdots+u_5=\eta a$, this means
$$
u_r+u_s+u_t=\eta a.$$
Substituting these two linear relations into
$$
a^4+a^2e_2(u_1,\ldots,u_5)+e_4(u_1,\ldots,u_5)
$$
and using $\eta=2\zeta+1$ and $\zeta^2+\zeta+1=0$, one obtains zero. The proof
for $\Lambda_{pq}^{rts}$ is identical.
\end{proof}

\begin{proposition}\label{prop:lambda-nodes}
Each plane $\Lambda_{pq}^{rst}$ (and $\Lambda_{pq}^{rts}$) contains exactly nine nodes of $X_\eta$.
More precisely, $\Lambda_{pq}^{rst}$ (and $\Lambda_{pq}^{rts}$) contains the three nodes
$$
E_p,
\quad
E_q,
\quad
T_{\{p,q\}},
$$
and six rectangular nodes.
\end{proposition}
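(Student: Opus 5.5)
The plan is a direct membership check: test each of the forty-five nodes listed in Section~\ref{sec:nodes} against the two linear equations of $\Lambda_{pq}^{rst}$. Since these are all the nodes of $X_\eta$ by Theorem~\ref{thm:balanced-is-burkhardt}, the count we obtain is automatically exact. Membership in $L_\eta$ is already built into each node, so only the two equations
$$
u_p+u_q=0,\qquad u_r+\zeta u_s+\zeta^2u_t=0
$$
need to be checked. Throughout I use $1+\zeta+\zeta^2=0$ and $\zeta-\zeta^2=2\zeta+1=\eta$. The case $\Lambda_{pq}^{rts}$ follows by exchanging $\zeta$ and $\zeta^2$, that is, by reversing the cyclic order, so it needs no separate argument.

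\emph{Extra nodes.} For $E_c$ all $u$-coordinates are $\pm\eta/3$. If $c\in\{p,q\}$, then $u_p+u_q=0$, and $u_r=u_s=u_t=\eta/3$, so the second equation becomes $\tfrac{\eta}{3}(1+\zeta+\zeta^2)=0$. Hence $E_p$ and $E_q$ lie on the plane. If $c\in D$, then $u_p+u_q=2\eta/3\neq0$, so no other $E_c$ does.

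\emph{Triangular nodes.} For $T_C$ I split according to how $C$ meets $\{p,q\}$.
\begin{itemize}
\item If $C=\{p,q\}$, both equations hold trivially, since $u_r=u_s=u_t=0$.
\item If $|C\cap\{p,q\}|=1$, then $u_p+u_q=\pm1\neq0$.
\item If $C\subset D$, say $C=\{r,s\}$ up to cyclic symmetry, then $u_p=u_q=0$. The second equation gives a value such as $1-\zeta$, $\zeta-\zeta^2$ or $\zeta^2-1$, and none of these vanishes.
\end{itemize}
So $T_{\{p,q\}}$ is the only triangular node on the plane.

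\emph{Rectangular nodes.} For $R_{A,B;c}$, the condition $u_p+u_q=0$ excludes $c\in\{p,q\}$, because $\eta\pm1\neq0$. It then forces $p$ and $q$ to be split between $A$ and $B$, which gives $2$ choices. Hence $c\in D$, and the other two elements of $D$ also lie one in $A$ and one in $B$.

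Take $c=r$ and $u_s=-u_t=\varepsilon$. The second equation reads $\eta+\varepsilon(\zeta-\zeta^2)=\eta(1+\varepsilon)$, so it holds exactly when $\varepsilon=-1$. Cyclic symmetry gives the analogous statement for $c=s$ and $c=t$: for each $c\in D$ exactly one sign pattern on the remaining two elements works. This gives $3\cdot2=6$ rectangular nodes, and the total is $2+1+6=9$.

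The only step needing care is this last sign computation for the rectangular nodes. There I have to make sure that the cyclic rotation of the second equation really transports the case $c=r$ to $c=s$ and $c=t$, using that multiplying the equation by a power of $\zeta$ permutes the coefficients cyclically. Otherwise the count might come out as something other than $6$.
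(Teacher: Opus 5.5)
Your proposal is correct and follows the same route as the paper: a direct check of the listed nodes against the two linear equations, where the rectangular count $2\cdot 3=6$ comes from splitting $\{p,q\}$ between $A$ and $B$ and from exactly one cyclic sign arrangement on $(r,s,t)$ for each choice of $c\in\{r,s,t\}$. Your version is more complete than the paper's, since you explicitly exclude the other $E_c$, the other $T_C$, and the rectangular nodes with $c\in\{p,q\}$, all of which the paper leaves implicit in its claim of exactly nine.
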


\begin{proof}
The nodes $E_p$, $E_q$ and $T_{\{p,q\}}$ satisfy the defining equations of
$\Lambda_{pq}^{rst}$ (and $\Lambda_{pq}^{rts}$) directly. For the rectangular nodes, suppose first that
we are on $\Lambda_{pq}^{rst}$. The condition
$u_p+u_q=0$ forces $p$ and $q$ to have opposite signs, so one of them belongs
to $A$ and the other to $B$ (cf. \eqref{eq:AB}). The second equation is satisfied precisely when
$(u_r,u_s,u_t)$ is one of the three cyclic arrangements
$$
(-1,1,\eta),
\qquad
(1,\eta,-1),
\qquad
(\eta,-1,1).
$$
This gives $2\cdot 3=6$ rectangular nodes. The case of $\Lambda_{pq}^{rts}$ gives
the opposite cyclic arrangements. Hence each plane $\Lambda_{pq}^{rst}$ (and $\Lambda_{pq}^{rts}$)
contains $2+1+6=9$ nodes.
\end{proof}

\begin{theorem}\label{thm:forty-jacobi-planes}
The forty planes
$$
\{\Pi_{ij}:i\neq j\}
\quad\text{and}\quad
\{\Lambda_{pq}^{rst},\; \Lambda_{pq}^{rts}\; :1\leq p<q\leq 5\}
$$
are precisely the forty Jacobi planes on the Burkhardt quartic $X_\eta$.
\end{theorem}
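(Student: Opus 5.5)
The plan is to combine three facts: each of the forty planes lies on $X_\eta$, each contains nine nodes, and the forty planes are pairwise distinct. The classical count then closes the argument. By Theorem~\ref{thm:balanced-is-burkhardt}, $X_\eta$ is the Burkhardt quartic, which has exactly forty Jacobi planes. These are characterized as the planes in the quartic containing nine of the forty-five nodes; this is the characterization already used in Corollary~\ref{cor:pascal-planes-jacobi}.

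First, Proposition~\ref{prop:pascal-planes-contained} and Proposition~\ref{prop:node-incidence-table} show that each $\Pi_{ij}$ is a plane on $X_\eta$ containing nine nodes. Hence, by Corollary~\ref{cor:pascal-planes-jacobi}, all twenty are Jacobi planes. Likewise, Propositions~\ref{prop:lambda-contained} and~\ref{prop:lambda-nodes} show that each $\Lambda_{pq}^{rst}$ and each $\Lambda_{pq}^{rts}$ is a plane on $X_\eta$ containing exactly nine nodes, so it is a Jacobi plane by the same characterization. Before this, I will check that the two defining equations together with $L_\eta$ are independent, so that the $\Lambda$'s really are planes. This is clear, since $u_p+u_q$ and $u_r+\zeta u_s+\zeta^2u_t$ involve disjoint sets of variables and are independent modulo the relation $u_1+\cdots+u_5-\eta a$, which contains $a$.

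Next, I show that the forty planes are pairwise distinct, using their node sets, which are determined in the two propositions above.
\begin{itemize}
\item No $\Pi_{ij}$ contains an extra node $E_c$, while every $\Lambda$ contains two of them. So the two families are disjoint.
\item Distinct $\Pi_{ij}$ are distinct planes, because they are defined by different pairs of linear forms. Alternatively, by Proposition~\ref{prop:line-incidence} any two of them meet in at most a line.
\item $\Lambda$-planes with different pairs $\{p,q\}$ contain different pairs $\{E_p,E_q\}$, so they are distinct.
\item For a fixed pair $\{p,q\}$, the planes $\Lambda_{pq}^{rst}$ and $\Lambda_{pq}^{rts}$ contain the opposite cyclic arrangements of rectangular nodes. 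For example, the node with $(u_r,u_s,u_t)=(-1,1,\eta)$ lies on the first plane but not on the second, since $-1+\zeta^2+\zeta\eta\neq 0$. This is a short check using $\eta=2\zeta+1$.
\end{itemize}
This gives $20+10\cdot 2=40$ distinct Jacobi planes. Since the Burkhardt quartic has exactly forty Jacobi planes, the list is complete.

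The main obstacle is relying on the classical facts that the Jacobi planes are exactly the planes containing nine nodes, and that there are exactly forty of them. I will cite \cite{BruinNasserden2018,CheltsovTschinkelZhang2025,Todd1936} for both, as in Corollary~\ref{cor:pascal-planes-jacobi}. If a self-contained argument were wanted instead, I would derive a bound on the number of Jacobi planes from the node incidences. Each node lies on exactly eight Jacobi planes: an $R$ node lies on four $\Pi$'s and, by the count above, on four $\Lambda$'s; a $T$ node lies on six $\Pi$'s and on the two $\Lambda_{pq}$'s; an $E$ node lies on eight $\Lambda$'s. Double counting gives $45\cdot 8/9=40$, consistent with the classical count. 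I will nevertheless rely on the cited classical result rather than attempt this independently.
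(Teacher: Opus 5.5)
Your proposal is correct and follows the paper's proof: both show that each of the forty planes lies on $X_\eta$ and carries nine nodes, separate the two families using the extra nodes $E_c$, and conclude from the classical fact that there are exactly forty Jacobi planes. Your extra checks, that the $\Lambda$'s are genuine planes and are pairwise distinct via $\{E_p,E_q\}$ and the opposite cyclic arrangements, fill in a detail the paper leaves implicit. Your double-counting aside only counts the listed planes through each node, so it is a consistency check rather than an independent bound; you rightly do not rely on it.
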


\begin{proof}
By Corollary~\ref{cor:pascal-planes-jacobi}, the twenty Pascal planes are
Jacobi planes. By Proposition~\ref{prop:lambda-contained} and
Proposition~\ref{prop:lambda-nodes}, the twenty planes $\Lambda_{pq}^{\pm}$ are
also Jacobi planes. They are distinct from the Pascal planes, because each
$\Lambda_{pq}^{\pm}$ contains two of the extra nodes $E_c$, while no Pascal
plane contains any $E_c$. Since the Burkhardt quartic contains exactly forty
Jacobi planes, these are all of them.
\end{proof}

\begin{remark}
The visible $\mathfrak S_5$-symmetry preserves the two families of twenty
planes separately. The full automorphism group of the Burkhardt quartic is
$\textsf{PSp}_4(\mathbb F_3)$ and acts transitively on the forty Jacobi planes;
see \cite{Todd1947,CheltsovTschinkelZhang2025}. Thus individual Pascal planes
can be moved to complementary Jacobi planes by automorphisms of the Burkhardt
quartic. However, no automorphism sends the whole Pascal half to the
complementary half: with respect to the Pascal half the numbers of planes
through the three types of nodes are $4,6,0$, while with respect to the
complementary half they are $4,2,8$.
\end{remark}

\section{Steiner hyperplanes}
\label{sec:steiner}
In this section, in order to complete the combinatorial picture associated to the Burkhardt quartic, we consider the $40$ Steiner hyperplanes.

\subsection{Steiner hyperplanes from the Pascal data}
Ten Steiner hyperplanes are immediately identified as the hyperplanes used for the Pascal construction.
For every $i$ we have
$$
F_i=(a+u_i=0),
\qquad
G_i=(a-u_i=0).
$$

\begin{proposition}\label{prop:steiner}
For every $i=1,\ldots,5$,
$$
F_i\cap X_\eta=\bigcup_{j\neq i}\Pi_{ij}
$$
and
$$
G_i\cap X_\eta=\bigcup_{j\neq i}\Pi_{ji}.
$$
Moreover, each of the hyperplanes $F_i$ and $G_i$ contains exactly eighteen
nodes of $X_\eta$. Hence these ten hyperplanes are Steiner hyperplanes of the
Burkhardt quartic.
\end{proposition}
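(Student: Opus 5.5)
Restrict the equation of $X_\eta$ to $F_i$ and count nodes using the explicit list of Section~\ref{sec:nodes}.

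\emph{Hyperplane section.} The quartic comes from $(f_1\cdots f_5+g_1\cdots g_5)/h$ with $f_k=a+u_k$, $g_k=a-u_k$ and $h=2a$. On $F_i=(f_i=0)$ the numerator reduces to $g_1\cdots g_5$. On $F_i$ we have $g_i=h-f_i=h$, so the quotient restricts to $\prod_{j\neq i}g_j$. Hence
$$F_i\cap X_\eta=\bigcup_{j\neq i}(F_i\cap G_j)=\bigcup_{j\neq i}\Pi_{ij},$$
with each plane appearing with multiplicity one.

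This requires $F_i\not\subset X_\eta$, which holds because $\prod_{j\ne i}g_j$ does not vanish identically on $F_i$. To see this, note that $F_i\neq G_j$ as hyperplanes of $L_\eta$ for $j\neq i$: otherwise $a+u_i$ and $a-u_j$ would be proportional modulo $u_1+\cdots+u_5-\eta a$, which is impossible by comparing coefficients. For the same reason no $G_j$ coincides with $H=(a=0)$. The case of $G_i$ is symmetric: there $f_i=h$, and the quotient restricts to $\prod_{j\neq i}f_j$.

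\emph{Counting nodes on $F_i$.} I use the coordinates of the nodes directly.
\begin{itemize}
\item A rectangular node $R_{A,B;c}$ lies on $F_i$ iff $u_i=-1$, i.e.\ $i\in A$. There are $4$ choices of the partner of $i$ in $A$ and then $\binom{3}{2}=3$ choices of $B$, giving $12$ nodes.
\item A triangular node $T_C$ has $a=0$, so it lies on $F_i$ iff $u_i=0$, i.e.\ $i\in D$. This gives $\binom{4}{2}=6$ nodes.
\item An extra node $E_c$ has $a+u_i=1\pm\eta/3\neq 0$, so none of them lies on $F_i$.
\end{itemize}
The total is $12+6=18$. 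For $G_i$ the condition becomes $i\in B$ for rectangular nodes, giving $12$, and again $i\in D$ for triangular nodes, giving $6$. So $G_i$ also contains $18$ nodes.

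As a cross-check, the union of four planes each containing nine nodes, with the overlaps described in Proposition~\ref{prop:line-incidence}, is consistent with this count.

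\emph{Conclusion.} It remains to match the classical definition: a Steiner hyperplane of the Burkhardt quartic is a hyperplane cutting it in four Jacobi planes, equivalently one containing eighteen nodes. By Corollary~\ref{cor:pascal-planes-jacobi} the planes $\Pi_{ij}$ are Jacobi planes, so each of $F_i$ and $G_i$ satisfies both characterizations; one only needs to cite whichever version of the definition is used in \cite{BruinNasserden2018,CheltsovTschinkelZhang2025}.

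The only delicate point is this final citation; the computations above are immediate.
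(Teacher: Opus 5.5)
Your proposal is correct and follows the paper's proof: restrict $(f_1\cdots f_5+g_1\cdots g_5)/h$ to $F_i$ to get $\prod_{j\neq i}g_j$, count $12+6+0=18$ nodes from the explicit coordinates, and then invoke the classical characterization of Steiner hyperplanes. Your extra checks are sound refinements rather than a different route: that no factor $g_j$ vanishes identically on $F_i$, the incidence cross-check $4\cdot 9=12\cdot 2+6\cdot 2$, and the use of Corollary~\ref{cor:pascal-planes-jacobi} to see that the four components are Jacobi planes.
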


\begin{proof}
On $F_i$ we have $f_i=0$ and $g_i=h$. Therefore
$$
\frac{f_1\cdots f_5+g_1\cdots g_5}{h}\Big|_{F_i}
=
\prod_{j\neq i}g_j.
$$
This gives
$$
F_i\cap X_\eta=\bigcup_{j\neq i}(F_i\cap G_j)=\bigcup_{j\neq i}\Pi_{ij}.
$$
The proof for $G_i$ is analogous.

The hyperplane $F_i$ contains the rectangular nodes $R_{A,B;c}$ with $i\in A$.
For fixed $i$, choose the two elements of $B$ among the four remaining symbols,
and then choose $c$ among the two symbols not yet used. This gives
$$\binom{4}{2}\cdot 2=12$$
rectangular nodes. It also contains the triangular nodes $T_C$ for which
$i\notin C$, and there are $\binom{4}{2}=6$ of these. It contains no extra
node $E_c$. Hence $F_i$ contains $12+6=18$ nodes. The same argument applies to
$G_i$, with the condition $i\in B$ for rectangular nodes.

By the classical geometry of the Burkhardt quartic, hyperplanes containing
eighteen nodes are precisely Steiner hyperplanes, and each cuts the quartic in
a union of four Jacobi planes; see \cite{BruinNasserden2018,CheltsovTschinkelZhang2025}.
\end{proof}

\begin{remark}
The diagonal hyperplane $H=(a=0)$ is not one of these Steiner hyperplanes. It
contains exactly the ten triangular nodes $T_C$. The Steiner hyperplanes that
come directly from the residual construction are the ten hyperplanes $F_i$ and
$G_i$.
\end{remark}

\subsection{The additional Steiner hyperplanes}
In this part we identify geometrically the remaining $30$ Steiner hyperplanes.
Recall that
$$
u_1+\cdots+u_5=\eta a,
\qquad \eta^2=-3,
$$
and put
$$
\zeta=\frac{-1+\eta}{2}.
$$
\begin{proposition}[Steiner hyperplanes in Pascal coordinates]\label{prop:Steiner_30}
The thirty Steiner hyperplanes of the Burkhardt quartic not described in Proposition \ref{prop:steiner} are given in
Pascal coordinates by the following equations:
$$K_{pq}:\ u_p+u_q=0,
\qquad 1\leq p<q\leq 5,
$$
and
$$
S_{pq}:\ a-\zeta^2u_p+\zeta u_q=0,
\qquad p\neq q.
$$
\end{proposition}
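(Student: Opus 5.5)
The plan mirrors the proof of Proposition~\ref{prop:steiner}. For each of the thirty proposed hyperplanes I will check two things: that it contains exactly eighteen of the forty-five nodes listed in Section~\ref{sec:nodes}, and that it cuts $X_\eta$ in a union of four of the forty Jacobi planes of Theorem~\ref{thm:forty-jacobi-planes}. Then I invoke the classical characterization already used there: on the Burkhardt quartic, hyperplanes containing eighteen nodes are exactly the Steiner hyperplanes. Finally I count. The ten hyperplanes $F_i,G_i$ have equations $a\pm u_i=0$. The ten $K_{pq}$ and twenty $S_{pq}$ are pairwise distinct and distinct from these, as is visible from the equations. Since there are exactly forty Steiner hyperplanes, the thirty new ones are exactly the remaining ones.

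\textbf{Step 1: the hyperplanes $K_{pq}$.} The planes $\Lambda_{pq}^{rst}$ and $\Lambda_{pq}^{rts}$ lie in $K_{pq}$. So do the Pascal planes $\Pi_{rs}$ with $r,s\in D=\{1,\ldots,5\}\setminus\{p,q\}$? No: on $\Pi_{rs}$ the coordinates $u_p,u_q$ are not constrained, so these are not contained in $K_{pq}$. The right candidates are found on the other side. Restricting to $u_q=-u_p$ gives $e_2(u)=-u_p^2+e_2(u_D)$ and $e_4(u)=-u_p^2e_2(u_D)+e_4'$, where $e_4'$ is the degree-four term, which vanishes since only three variables remain. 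Thus the quartic becomes
\[
a^4+a^2\bigl(e_2(u_D)-u_p^2\bigr)-u_p^2e_2(u_D)=\bigl(a^2-u_p^2\bigr)\bigl(a^2+e_2(u_D)\bigr).
\]
So $K_{pq}\cap X_\eta$ consists of $\{a=\pm u_p\}$, which is $F_q\cap K_{pq}$ and similar pieces, together with the quadric $a^2+e_2(u_D)=0$. Using $u_r+u_s+u_t=\eta a$, this quadric factors into the two $\Lambda$ planes. The components $a=u_p$ and $a=-u_p$ with $u_q=-u_p$ are the planes $\Pi_{qp}$ and $\Pi_{pq}$, since $a-u_p=0$ and $a+u_q=0$ there. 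So $K_{pq}$ cuts $X_\eta$ in four Jacobi planes.

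For the node count on $K_{pq}$:
\begin{itemize}
\item rectangular nodes with $\{p,q\}$ split between $A$ and $B$: $2\cdot 3\cdot 2=12$;
\item $T_{\{p,q\}}$, together with $T_C$ for $C\cap\{p,q\}=\emptyset$: $1+3=4$;
\item $E_p$ and $E_q$: $2$.
\end{itemize}
This gives $12+4+2=18$.

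\textbf{Step 2: the hyperplanes $S_{pq}$.} I would substitute $a=\zeta^2u_p-\zeta u_q$ and try to exhibit four Jacobi planes lying in $S_{pq}$. Natural guesses are one Pascal plane and three $\Lambda$ planes, for instance $\Pi_{qp}$ or $\Pi_{pq}$ together with suitable $\Lambda^{\bullet}_{\bullet}$ planes through $p,q$. The check is a direct substitution using $\zeta^2+\zeta+1=0$ and $\eta=2\zeta+1$. I would then count nodes:
\begin{itemize}
\item $R_{A,B;c}$ lies on $S_{pq}$ when $1+\zeta^2u_p-\zeta u_q=0$, i.e.\ for specific sign/$\eta$ patterns of $(u_p,u_q)$;
\item $T_C$ lies on $S_{pq}$ when $\zeta^2u_p=\zeta u_q$;
\item $E_c$ lies on $S_{pq}$ for particular $c$.
\end{itemize}
Summing should give $18$.

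\textbf{Main obstacle.} I expect the hardest part to be this bookkeeping for $S_{pq}$, specifically the rectangular nodes where $u_c=\eta$ appears. To keep it manageable, I would first reduce by symmetry: the visible $\mathfrak S_5$ action makes it enough to treat $(p,q)=(1,2)$. I would then tabulate the nine possible values of $(u_p,u_q)\in\{-1,1,\eta\}^2$ that occur at rectangular nodes, and check which satisfy the linear condition.
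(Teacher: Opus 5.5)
Your Step~1 is correct and is essentially the paper's argument for $K_{pq}$. You restrict to $u_q=-u_p$, factor the quartic as $(a^2-u_p^2)(a^2+e_2(u_r,u_s,u_t))$, identify the factors $a\pm u_p$ with $\Pi_{pq}$ and $\Pi_{qp}$, and split the quadric into $\Lambda_{pq}^{rst}\cup\Lambda_{pq}^{rts}$ using $\eta^2=-3$. Your additional node count $12+4+2=18$ is also right.

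The genuine gap is Step~2, which covers twenty of the thirty hyperplanes and is only a plan. You never determine which four Jacobi planes lie in $S_{pq}$, and the node count stops at ``should give $18$''. Your hedge ``$\Pi_{qp}$ or $\Pi_{pq}$'' matters: on $\Pi_{qp}$ one has $u_p=a$, $u_q=-a$, so $a-\zeta^2u_p+\zeta u_q=(1-\zeta^2-\zeta)a=2a\neq 0$. Moreover, the one concrete condition you write down is wrong. At $a=1$ the equation of $S_{pq}$ is $1-\zeta^2u_p+\zeta u_q=0$, not $1+\zeta^2u_p-\zeta u_q=0$. With your sign, the tabulation of $(u_p,u_q)$ finds only $(1,-1)$, i.e.\ six rectangular nodes instead of twelve. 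In fact $a+\zeta^2u_p-\zeta u_q=0$ is a hyperplane through $\Pi_{qp}$ containing only the nine nodes of that plane, so the check as written would fail.

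To close the gap, put $\{r,s,t\}=\{1,\ldots,5\}\setminus\{p,q\}$ and show
$X_\eta\cap S_{pq}=\Pi_{pq}\cup\Lambda_{rs}^{pqt}\cup\Lambda_{rt}^{pqs}\cup\Lambda_{st}^{pqr}$.
The relevant $\Lambda$-planes are those whose pair is disjoint from $\{p,q\}$ and whose cyclic triple has $q$ immediately after $p$.
\begin{itemize}
\item Containment of $\Pi_{pq}$ is just $1+\zeta+\zeta^2=0$.
\item On $\Lambda_{rs}^{pqt}$ one has $u_t=-\zeta u_p-\zeta^2u_q$ and $u_p+u_q+u_t=\eta a$. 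Hence $(1-\zeta)u_p+(1-\zeta^2)u_q=\eta a$, and since $\eta=\zeta-\zeta^2$ this is exactly $a=\zeta^2u_p-\zeta u_q$. The other two $\Lambda$-planes are handled identically.
\end{itemize}
This is the paper's route, and it suffices on its own.

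If you also want the node count via your eighteen-node criterion, the correct tally is as follows.
\begin{itemize}
\item Rectangular nodes with $(u_p,u_q)\in\{(-1,1),(\eta,-1),(1,\eta)\}$ contribute $6+3+3=12$; use $\zeta^2\eta=1-\zeta$ and $\zeta\eta=\zeta^2-1$.
\item The three $T_C$ with $C\cap\{p,q\}=\emptyset$ lie on $S_{pq}$.
\item The three $E_c$ with $c\notin\{p,q\}$ lie on $S_{pq}$; use $\eta^2/3=-1$.
\end{itemize}
This gives $18$. Your final distinctness-and-counting step is fine.
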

\begin{proof}
We use the notation for the complementary Jacobi planes introduced in Section \ref{sec:complementary-planes}. 
If
$$
\{1,\ldots,5\}=\{p,q\}\sqcup\{r,s,t\},
$$
then we write
$$
\Lambda_{pq}^{rst}:
\quad
u_p+u_q=0,\qquad
u_r+\zeta u_s+\zeta^2u_t=0.
$$
Cyclic permutation of the ordered triple $(r,s,t)$ gives the same plane,
whereas reversing the cyclic order gives the conjugate plane.

We first observe that each hyperplane in the Proposition contains four Jacobi planes.

To begin with, the hyperplane
$$
K_{pq}=\{u_p+u_q=0\}.
$$
contains $\Pi_{pq}$ and $\Pi_{qp}$, since on
$\Pi_{pq}$ one has $u_p=-a$ and $u_q=a$, and similarly for $\Pi_{qp}$.
It also contains the two complementary Jacobi planes
$$
\Lambda_{pq}^{rst}
\quad\text{and}\quad
\Lambda_{pq}^{rts},
$$
where $\{r,s,t\}=\{1,\ldots,5\}\setminus\{p,q\}$. Therefore
$$
X_\eta\cap K_{pq}
=
\Pi_{pq}+\Pi_{qp}+\Lambda_{pq}^{rst}+\Lambda_{pq}^{rts}.
$$
Equivalently, this can be checked by factoring the equation on $K_{pq}$.
Indeed, on $K_{pq}$ we have $u_q=-u_p$, and the relation
$$
u_1+\cdots+u_5=\eta a
$$
becomes
$$
u_r+u_s+u_t=\eta a.
$$
Hence
$$
a^2+e_2(u_r,u_s,u_t)
=
-\frac{1}{3}
\left(u_r+\zeta u_s+\zeta^2u_t\right)
\left(u_r+\zeta^2u_s+\zeta u_t\right),
$$
because $\eta^2=-3$. Thus the restriction of the quartic to $K_{pq}$ factors,
up to a non-zero scalar, as
$$
(a+u_p)(a-u_p)
\left(u_r+\zeta u_s+\zeta^2u_t\right)
\left(u_r+\zeta^2u_s+\zeta u_t\right).
$$
These four factors give exactly the four planes displayed above.

It remains to consider the hyperplanes
$$
S_{pq}=\{a-\zeta^2u_p+\zeta u_q=0\},
\qquad p\neq q.
$$
First, $S_{pq}$ contains the Pascal plane $\Pi_{pq}$, because on $\Pi_{pq}$
we have $u_p=-a$ and $u_q=a$, and hence
$$
a-\zeta^2u_p+\zeta u_q
=
a+\zeta^2a+\zeta a
=
(1+\zeta+\zeta^2)a
=
0.
$$
Now let $\{r,s,t\}=\{1,\ldots,5\}\setminus\{p,q\}$. We claim that
$S_{pq}$ contains the three complementary Jacobi planes
$$
\Lambda_{rs}^{pqt},\qquad
\Lambda_{rt}^{pqs},\qquad
\Lambda_{st}^{pqr}.
$$
For example, on $\Lambda_{rs}^{pqt}$ we have
$$
u_r+u_s=0
$$
and
$$
u_p+\zeta u_q+\zeta^2u_t=0.
$$
The second equation gives
$$
u_t=-\zeta u_p-\zeta^2u_q.
$$
Using the relation $u_1+\cdots+u_5=\eta a$ and the equality
$u_r+u_s=0$, we obtain
$$
u_p+u_q+u_t=\eta a.
$$
Substituting the expression for $u_t$ gives
$$
(1-\zeta)u_p+(1-\zeta^2)u_q=\eta a.
$$
Since $\eta=\zeta-\zeta^2$, this is equivalent to
$$
a=\zeta^2u_p-\zeta u_q.
$$
Thus
$$
a-\zeta^2u_p+\zeta u_q=0,
$$
so $\Lambda_{rs}^{pqt}\subset S_{pq}$. The same argument applies to
$\Lambda_{rt}^{pqs}$ and $\Lambda_{st}^{pqr}$.

Therefore
$$
X_\eta\cap S_{pq}
=
\Pi_{pq}
+
\Lambda_{rs}^{pqt}
+
\Lambda_{rt}^{pqs}
+
\Lambda_{st}^{pqr}.
$$

We have thus found
$$
\binom{5}{2}+5\cdot 4=30
$$
distinct hyperplanes, and each cuts the Burkhardt quartic into a union of four
Jacobi planes. 
\end{proof}

\begin{corollary}
The hyperplanes listed in Propositions~\ref{prop:steiner} and
\ref{prop:Steiner_30} are precisely the forty Steiner hyperplanes of the
Burkhardt quartic.
\end{corollary}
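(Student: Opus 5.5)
The plan is to combine the two propositions with a counting argument. Proposition~\ref{prop:steiner} gives ten Steiner hyperplanes $F_i,G_i$. Proposition~\ref{prop:Steiner_30} gives thirty hyperplanes $K_{pq}$, $S_{pq}$, each cutting $X_\eta$ in a union of four Jacobi planes. Since the Burkhardt quartic has exactly forty Steiner hyperplanes (classically, the hyperplanes cutting it in four Jacobi planes), it suffices to prove two things: that the forty listed hyperplanes are pairwise distinct, and that each of the thirty is a genuine Steiner hyperplane.

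For the second point I would use the same characterization invoked in the proof of Proposition~\ref{prop:steiner}: a hyperplane section of the Burkhardt quartic that splits into four Jacobi planes is a Steiner hyperplane \cite{BruinNasserden2018,CheltsovTschinkelZhang2025}. As an independent check, I would count nodes on $K_{pq}$ and $S_{pq}$ and verify that each contains eighteen. For $K_{pq}$ this count is:
\begin{itemize}
\item the rectangular nodes with $p,q$ of opposite sign, giving $2\cdot 3\cdot 2=12$;
\item the node $T_{\{p,q\}}$, the nodes $T_C$ with $C\cap\{p,q\}=\emptyset$, and the nodes $E_p,E_q$, giving $1+3+2=6$.
\end{itemize}
For $S_{pq}$, I would count the union of nodes on the four planes: nine on each plane, minus the overlaps, organized by the incidences computed in Propositions~\ref{prop:node-incidence-table} and \ref{prop:lambda-nodes}.

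Distinctness is the step needing care, and I would settle it by comparing the Jacobi plane decompositions. Each hyperplane is determined by any two of the distinct planes it contains, so two hyperplanes with different sets of four planes differ. The four types separate by how many Pascal planes they contain:
\begin{itemize}
\item $F_i$ and $G_i$ contain four Pascal planes;
\item $K_{pq}$ contains two Pascal planes ($\Pi_{pq},\Pi_{qp}$);
\item $S_{pq}$ contains exactly one Pascal plane ($\Pi_{pq}$).
\end{itemize}
Within each type the planes differ: $F_i$ contains the planes with tail $i$ while $G_i$ contains those with head $i$; the unordered pair $\{p,q\}$ determines $K_{pq}$; and the ordered pair $(p,q)$ determines $S_{pq}$ through $\Pi_{pq}$. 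Hence there are $10+10+20=40$ distinct hyperplanes. This completes the argument, using the count of forty Steiner hyperplanes and the fact that no other hyperplane meets the quartic in four Jacobi planes.

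The main obstacle is confirming that the Steiner hyperplanes of the Burkhardt quartic are exactly those whose sections split into four Jacobi planes, since otherwise the count could include spurious hyperplanes. I would rely on the cited classical literature for this, as the paper already does in Proposition~\ref{prop:steiner}. If needed, I would back it with the transitive action of $\mathsf{PSp}_4(\mathbb F_3)$ on Steiner hyperplanes, under which every hyperplane containing four Jacobi planes looks like $F_1$.
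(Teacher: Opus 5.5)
Your proposal is correct and is essentially the paper's argument: both combine Propositions~\ref{prop:steiner} and~\ref{prop:Steiner_30} with the classical count of forty Steiner hyperplanes, and both rely on the cited literature for the fact that a hyperplane cutting the quartic in four Jacobi planes is a Steiner hyperplane. Your distinctness check (via the number of Pascal planes in each section) and your node counts spell out what the paper leaves implicit: the count $12+6=18$ for $K_{pq}$ is right, and $S_{pq}$ likewise contains $12+3+3=18$ nodes; the closing appeal to transitivity of the automorphism group is circular as stated, but the argument does not need it.
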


\begin{proof}
Combining Propositions~\ref{prop:steiner} and~\ref{prop:Steiner_30}, we have
found altogether forty hyperplanes whose sections by $X_\eta$ are unions of
four Jacobi planes. By the classical description of the Burkhardt quartic,
there are exactly forty Steiner hyperplanes. Hence the listed hyperplanes are
all of them.
\end{proof}

\section{Further questions}
\label{sec:further-questions}

We conclude with some questions suggested by the constructions considered here.

\begin{question}
Which hyperplane sections
$$
X_\lambda=\mathcal X\cap L_\lambda
$$
are nodal? More precisely, what is the discriminant in the parameter space
$(\PP^5)^\vee$, and which special points or subvarieties of this discriminant
have a classical interpretation?
\end{question}

\begin{question}
Is there a residual or Pascal-type construction that produces the complementary
Jacobi planes $\Lambda_{pq}^{rst}$ directly, rather than through the
identification with the Burkhardt quartic?
\end{question}

\begin{question}
How does the Pascal decomposition
$$
40=20+20
$$
of the Jacobi planes interact with the classical modular interpretation of the
Burkhardt quartic as a compactification of a Siegel modular threefold?
\end{question}

\paragraph{Acknowledgements.} We would like to thank Igor Dolgachev for his comments on the first draft of this work.

%\bibliographystyle{abbrv}
%\bibliography{pascal-type}

\bigskip
\noindent
Tomasz Szemberg, Justyna Szpond\\
Department of Mathematics,\\
University of the National Education Commission Krakow,\\
Podchor\c a\.zych 2,
PL-30-084 Krak\'ow, Poland. \\
\nopagebreak
\textit{E-mail address:} \\
\texttt{tomasz.szemberg@gmail.com}\\
\texttt{szpond@gmail.com}
\end{document}